\def\F{\Bbb F}
\def\bg{\bigg}
\def\({\bg(}
\def\){\bg)}
\def\Ack{\medskip\noindent {\bf Acknowledgments}}
\theoremstyle{plain}
\newtheorem{theorem}{Theorem}
\newtheorem{lemma}{Lemma}
\newtheorem{conjecture}{Conjecture}
\theoremstyle{definition}
\theoremstyle{remark}
\begin{document}
	\baselineskip=17pt
	\hbox{} {}
	\medskip
	\title[H.-L. Wu and Y.-F. She,]
	{On additive decompositions of primitive elements in finite fields}
	\date{}
	\author[H.-L. Wu and Y.-F. She]{Hai-Liang Wu and Yue-Feng She}
	
	\thanks{2020 {\it Mathematics Subject Classification}.
		Primary 11P70; Secondary 11T24, 11T30.
		\newline\indent {\it Keywords}. primitive elements, additive combinatorics, character sums.
		\newline \indent  This work was supported by the National Natural Science Foundation of China (Grant No. 12101321 and Grant No. 11971222) and the Natural Science Foundation of the Higher Education Institutions of Jiangsu Province (21KJB110002). }
	
	\address {(Hai-Liang Wu) School of Science, Nanjing University of Posts and Telecommunications, Nanjing 210023, People's Republic of China}
	\email{\tt whl.math@smail.nju.edu.cn}
	
	\address {(Yue-Feng She) Department of Mathematics, Nanjing
		University, Nanjing 210093, People's Republic of China}
	\email{{\tt she.math@smail.nju.edu.cn}}
	
	\begin{abstract}
	In this paper, we study several topics on additive decompositions of primitive elemements in finite fields. Also we refine some bounds obtained by  Dartyge and S\'{a}rk\"{o}zy and Shparlinski.
	\end{abstract}
	\maketitle
	\section{Introduction}
	
Let $p$ be an odd prime and let $\mathbb{F}_p$ be the finite field of $p$ elements. Let $\mathbb{F}_p^{\times}$ be the set of all non-zero elements of $\mathbb{F}_p$. It is known that $\mathbb{F}_p^{\times}$ is a cyclic group. An element $x\in\mathbb{F}_p^{\times}$ is called a {\it  primitve element} if $x$ generates $\mathbb{F}_p^{\times}$. Also, we use the symbol $P_p$ to denote the set of all primitive elements in $\mathbb{F}_p$. 

\subsection{Some related results}
Primitive elements have been investiaged extensively in additive combinatorics. For example, Cohen and his collaborators \cite{CET2015} showed that if $p>169$, then there always exists an element
$x\in\F_p^{\times}$ such that $x$, $x+1$ and $x+2$ are all primitive elements.

For any $A_1,\cdots,A_k\subseteq\mathbb{F}_p$, we define 
$$\sum_{i=1}^kA_i=A_1+\cdots+A_k=\{a_1+\cdots+a_k: a_i\in A_i\ \text{for}\ i=1,\cdots,k\}.$$
Dartyge and S\'{a}rk\"{o}zy \cite{Dartyge} studied the additive decompositions of $P_p$. 
In \cite[Conjecture 3]{Dartyge} they posed the following conjecture.
\begin{conjecture}\label{Conj. D and S}
If $p$ is large enough, then 
	$$A+B\neq P_p$$
for any $A,B\subseteq\mathbb{F}_p$ with $|A|,|B|\ge2$. 
\end{conjecture}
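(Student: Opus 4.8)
My plan is to attack the conjecture through the classical characteristic function of primitive elements due to Vinogradov. Writing $\mathbf{1}_{P_p}$ for the indicator of $P_p$, one has
$$\mathbf{1}_{P_p}(x)=\frac{\varphi(p-1)}{p-1}\sum_{d\mid p-1}\frac{\mu(d)}{\varphi(d)}\sum_{\mathrm{ord}(\chi)=d}\chi(x)\qquad(x\in\F_p^{\times}),$$
the inner sum running over multiplicative characters of exact order $d$. The first observation is that $A+B=P_p$ forces the inclusion $A+B\subseteq P_p$, so every one of the $|A||B|$ pairs $(a,b)$ has $a+b\in P_p$; since $0\notin P_p$ this gives $\sum_{a\in A}\sum_{b\in B}\mathbf{1}_{P_p}(a+b)=|A||B|$ exactly.

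Next I would substitute the Vinogradov formula and isolate the principal character $\chi_0$, whose contribution is $\tfrac{\varphi(p-1)}{p-1}|A||B|+O\!\left(\tfrac{\varphi(p-1)}{p-1}|A|\right)$, the error accounting for pairs with $a+b=0$. The whole game is then to bound the non-principal contributions, i.e.\ the bilinear sums $\sum_{a\in A}\sum_{b\in B}\chi(a+b)$ for $\chi\neq\chi_0$. Applying Cauchy--Schwarz in the $b$-variable reduces matters to the complete sums $\sum_{b}\chi\!\left(\frac{a_1+b}{a_2+b}\right)$, which for $a_1\neq a_2$ evaluate (by an elementary substitution, or Weil's bound) to $O(1)$; the diagonal $a_1=a_2$ contributes $|A|(p-1)$. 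This yields $\left|\sum_{a,b}\chi(a+b)\right|\ll (|A||B|\,p)^{1/2}$. Summing over all $d\mid p-1$ and all $\varphi(d)$ characters of order $d$ collapses the weights $\mu(d)/\varphi(d)$ and leaves a factor $\sum_{d\mid p-1}|\mu(d)|=2^{\omega(p-1)}$.

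Combining the two steps gives $|A||B|\le \tfrac{\varphi(p-1)}{p-1}|A||B|+O\!\big(2^{\omega(p-1)}(|A||B|p)^{1/2}\big)$. Since $p$ is odd we have $2\mid p-1$, hence $\varphi(p-1)/(p-1)\le 1/2$ and the coefficient $1-\varphi(p-1)/(p-1)$ on the left is $\ge 1/2$; rearranging produces the structural inequality
$$|A||B|\ll 4^{\omega(p-1)}\,p.$$
On the other hand $A+B=P_p$ forces $|A||B|\ge|A+B|=|P_p|=\varphi(p-1)$. These two bounds are compatible (both sides are $p^{1+o(1)}$), so the argument does not close on its own; what it does deliver is the exclusion of all \emph{balanced} decompositions, namely those with $|A||B|\gg 4^{\omega(p-1)}p=p^{1+o(1)}$, and in particular any decomposition with $\min(|A|,|B|)\ge p^{1/2+\varepsilon}$.

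The main obstacle — and the reason the full conjecture is hard — is the genuinely \emph{unbalanced} regime, where one factor, say $|A|$, is small, down to the extremal case $|A|=2$. There the $a$-sum has too few terms for the bilinear/Weil estimate to beat the trivial bound, and no cancellation can be extracted from a character sum of bounded length. Ruling out, e.g., $\{0,a\}+B=P_p$ amounts to showing that $P_p$ cannot be covered by two translates $B\cup(a+B)$ contained in $P_p$, which is a statement about the fine additive–multiplicative structure of the primitive elements rather than about their pseudorandomness, and appears to require essentially new input (for instance sum–product methods or explicit control of $P_p\cap(P_p-a)$). My proposal therefore settles the conjecture in the balanced range and isolates the unbalanced case as the remaining difficulty.
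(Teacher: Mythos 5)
You should first be aware that the statement you were asked to prove is stated in the paper as an \emph{open conjecture} (due to Dartyge and S\'{a}rk\"{o}zy); the paper offers no proof of it, explicitly calls it challenging, and even records the decompositions $P_{13}=\{0,5\}+\{2,6\}$ and $P_{19}=\{0,11,12\}+\{2,3,10\}$ to show that the hypothesis ``$p$ large enough'' cannot be dropped. So there is no proof in the paper to compare against, and your own write-up candidly concedes that the argument ``does not close on its own.'' That concession is correct: what you actually establish is the bilinear estimate $\bigl|\sum_{a,b}\chi(a+b)\bigr|\le(|A||B|p)^{1/2}+O(|A|)$ for non-principal $\chi$, leading to $|A||B|\ll 4^{\omega(p-1)}p$, hence $\min(|A|,|B|)\le p^{1/2+o(1)}$. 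This is a genuine partial result, but note that it is strictly weaker than what is already known and quoted in Section 1: Dartyge--S\'{a}rk\"{o}zy and Shparlinski (and Theorems \ref{Thm. B} and \ref{Thm. C} of this paper) give two-sided bounds $\varphi(p-1)/\sqrt{p}\ll|A|,|B|\ll\sqrt{p}$ on \emph{each} factor separately, whereas your Cauchy--Schwarz step only controls the product and hence only the smaller of the two sets.

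The more substantive error is in your diagnosis of where the difficulty lies. You claim the extremal case $|A|=2$ is the remaining obstacle and that the ``balanced'' range is settled. This is backwards. The known lower bound $|A|>\varphi(p-1)/(\tau(p-1)\sqrt{p}\log p)$, which tends to infinity with $p$, already rules out every decomposition with $|A|$ (or $|B|$) bounded --- in particular $|A|=2$ is impossible for large $p$, and this is exactly the content of \cite[Theorem 2.1]{Dartyge} and of Theorem \ref{Thm. A} above. What remains open is precisely the balanced regime $|A|\approx|B|\approx\sqrt{p}$, where $|A||B|\approx p$ sits at the exact threshold at which the Weil/bilinear estimates (including yours, and including the fourth-moment refinement via Lemma \ref{Lem. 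Shkredov on C(f)} used in Section 3) lose all their power: the main term $\tfrac{\varphi(p-1)}{p-1}|A||B|$ and the error term $W(p-1)\sqrt{p|A||B|}$ are then of comparable size $p^{1+o(1)}$, and no amount of pseudorandomness of $P_p$ alone can separate them. Closing the conjecture requires new input in that balanced range, not in the unbalanced one; as written, your proposal neither proves the conjecture nor correctly isolates the open case.
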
 
Note that for some small primes $p$ we can find decompostions:
\begin{itemize}
		\item $P_{13}=\{0,5\}+\{2,6\}$.
		\item $P_{19}=\{0,11,12\}+\{2,3,10\}$.
\end{itemize}
This conjecture looks challenging and only some partial results were obtained. Dartyge and S\'{a}rk\"{o}zy \cite[Theorem 2.1]{Dartyge} obtained that if $p$ is large enough and $A+B=P_p$ with $|A|,|B|\ge2$, then
	$$\frac{\varphi(p-1)}{\tau(p-1)\sqrt{p}\log{p}}<|A|,|B|<\tau(p-1)\sqrt{p}\log{p},$$
where $\varphi$ is the Euler function and $\tau(p-1)$ denotes the number of positive divisors of $p-1$.

Moreover, by this result Dartyge and S\'{a}rk\"{o}zy \cite[Theorem 3.1]{Dartyge} showed that if $p$ is large enough, then 
$$A+B+C\neq P_p$$
for any $A,B,C\subseteq\mathbb{F}_p$ with $|A|,|B|,|C|\ge2$.

Shparlinski \cite{Shp} improved Dartyge and S\'{a}rk\"{o}zy's bound and obtain that  
\begin{equation}\label{Eq. Shparlinski}
\frac{\varphi(p-1)}{\sqrt{p}}\ll |A|,|B|\ll\sqrt{p}
\end{equation}
if $A+B=P_p$ with $A,B\subseteq\mathbb{F}_p$ and $|A|,|B|\ge2$. 

\subsection{Main Results}
For any positive integer $n$, let $W(n)$ be the number of positive square-free divisors of $n$. We now state our first result.	
	
\begin{theorem}\label{Thm. A}
	Let $p>3$ be an odd prime. For an integer $k\ge2$, if 
	$$\frac{\varphi(p-1)}{W(p-1)\sqrt{p}}>k2^{k-1},$$
	then $A+B\neq P_p$ for any $A,B\subseteq\mathbb{F}_p$ with $|A|,|B|\ge2$ and $|B|=k$. 
\end{theorem}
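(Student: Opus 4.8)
The plan is to pit two opposing estimates for $|A|$ against each other and contradict the hypothesis. Throughout write $\varphi=\varphi(p-1)$ and $W=W(p-1)$, and recall the description of the indicator function of $P_p$ by multiplicative characters: for $x\in\F_p$,
\[
\mathbf{1}_{P_p}(x)=\frac{\varphi}{p-1}\sum_{d\mid p-1}\frac{\mu(d)}{\varphi(d)}\sum_{\ord\chi=d}\chi(x),
\]
where $\chi$ ranges over the characters of the indicated order, extended by $\chi(0)=0$. Only square-free $d$ survive (because of $\mu$), so exactly $W$ values of $d$ occur. I will use two consequences of $A+B=P_p$: \emph{inclusion}, that every sum $a+b$ is primitive; and \emph{surjectivity}, that every primitive element is represented.

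First I would record the easy lower bound. Writing $r(x)=\#\{(a,b)\in A\times B:\ a+b=x\}$, surjectivity gives $r(x)\ge1$ on $P_p$ while inclusion gives $r(x)=0$ off $P_p$, so
\[
k\,|A|=|A|\,|B|=\sum_{x\in\F_p}r(x)=\sum_{x\in P_p}r(x)\ge|P_p|=\varphi,
\]
whence $|A|\ge\varphi/k$.

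The heart of the argument is a matching upper bound of the shape $|A|<2^{k-1}W\sqrt p$, and this is where I expect the real work to lie. The mechanism is to test the inclusion property against multiplicative characters: since every $a+b$ is primitive, the twisted incomplete sums $\sum_{a\in A}\chi(a+b)$ are simultaneously constrained for all $b\in B$. Assembling these constraints over the $2^{k}$ subsets of $B$ and over the $W$ square-free divisors, one is led to estimate mixed multiplicative character sums
\[
\sum_{x\in\F_p}\ \prod_{b\in S}\chi_b(x+b),\qquad \emptyset\neq S\subseteq B,
\]
attached to products of $|S|\le k$ distinct linear forms. Because the $b$ are distinct and at least one $\chi_b$ is nontrivial, Weil's theorem gives $\bigl|\sum_x\prod_{b\in S}\chi_b(x+b)\bigr|\le(|S|-1)\sqrt p$. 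Collapsing the weights $\mu(d)/\varphi(d)$ summed over the $W$ admissible orders contributes one factor of $W$, while the subset bookkeeping produces the binomial sum $\sum_{j}j\binom{k}{j}=k2^{k-1}$; together with the analytic input $\sqrt p$ this should yield $|A|<2^{k-1}W\sqrt p$.

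Combining the two bounds gives $\varphi/k\le|A|<2^{k-1}W\sqrt p$, that is $\varphi/(W\sqrt p)<k2^{k-1}$, contradicting the hypothesis; hence no such decomposition exists. The main obstacle is the upper bound, for two reasons. First, a naive term-by-term estimate of the product $\prod_{b\in B}\mathbf 1_{P_p}(x+b)$ produces a principal term of the much larger size $(\varphi/(p-1))^k p$ and a spurious factor $W^{k}$; the delicate point is to organize the computation so that these principal contributions cancel and only the genuine $\sqrt p$-size error, carrying a single factor of $W$ and the combinatorial constant $2^{k-1}$, survives — and I expect this cancellation to force me to use surjectivity in tandem with inclusion, not inclusion alone. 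Second, one must apply Weil's bound uniformly across all subsets $S$, verifying that the degenerate contributions (coinciding or vanishing linear factors, and the characters of non-square-free order) are absorbed without spoiling the constant.
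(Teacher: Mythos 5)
Your overall skeleton (a trivial lower bound $|A|\ge\varphi(p-1)/k$ played against an upper bound $|A|<2^{k-1}W(p-1)\sqrt p$) would indeed yield the contradiction, and you have correctly identified the toolbox: the character expansion of the indicator of $P_p$, Weil's bound for products of shifted multiplicative characters, the constant $k2^{k-1}$ from subset bookkeeping, and the need for a single factor of $W(p-1)$ rather than $W(p-1)^k$. But the upper bound is asserted, not proved, and the one idea that makes the argument work is exactly the piece you flag as missing. The paper's device is this: after translating so that $0\in B=\{b_1,\cdots,b_{k-1},0\}$ (which forces $A\subseteq P_p$), the function
$$H(x)=P_p(x)\,\bigl(\chi_2(x+b_1)+1\bigr)\prod_{j=1}^{k-1}\bigl(\chi_2(x-b_j)+1\bigr)$$
vanishes identically: if $x\in A$ then $x+b_1\in A+B=P_p$ is a non-residue, so the first bracket dies (inclusion); if $x\in P_p\setminus A$ then surjectivity writes $x=a+b_j$ with $j\ge1$, so $x-b_j=a\in A\subseteq P_p$ is a non-residue and the $j$-th bracket dies. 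Expanding $0=\sum_xH(x)$, the main term $\sum_xP_p(x)=\varphi(p-1)$ survives, and every other term is a \emph{complete} sum $\sum_x\chi(x)\prod\chi_2(x\pm b)$ bounded by Weil. The crucial point is that only the single factor $P_p(x)$ is expanded into multiplicative characters (hence exactly one factor of $W(p-1)$), while the $k$ shifted factors are pure quadratic characters (hence $2^k$ terms and the constant $k2^{k-1}$). No bound on $|A|$, upper or lower, is ever needed: the contradiction is $0=\sum_xH(x)>\varphi(p-1)-k2^{k-1}W(p-1)\sqrt p>0$.

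Your route, by contrast, leaves unresolved the two problems you yourself name. First, the sums $\sum_{a\in A}\chi(a+b)$ are incomplete --- $A$ is an arbitrary set with no character formula for its indicator --- so Weil does not apply to them; the paper's $H(x)$ never refers to $A$ inside a character sum and only sums over all of $\mathbb{F}_p$. Second, you say the principal terms of size $(\varphi(p-1)/(p-1))^kp$ ``should cancel'' and that surjectivity must enter ``in tandem'' with inclusion, but you do not say how. That cancellation \emph{is} the identity $H\equiv0$: one replaces the $k$ shifted indicators $P_p(x+b)$, each of density $\varphi(p-1)/(p-1)$ and each costing a factor $W(p-1)$, by the exact annihilators $\chi_2(\cdot)+1$ (which vanish on non-residues, hence on $P_p$), with the shift $+b_1$ encoding inclusion and the shifts $-b_j$ encoding surjectivity. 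Without this construction, or an equivalent one, the bound $|A|<2^{k-1}W(p-1)\sqrt p$ --- which carries the entire content of the theorem --- remains unproved, so the proposal has a genuine gap at its central step.
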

	
As our next result, by using the methods introduced by Shkredov \cite{Shkredov} we obtain an improvement of Shparlinski's result.  

\begin{theorem}\label{Thm. B} Let $0<\varepsilon<1/2$ be a real number. If $p$ is a sufficiently large prime and $A+B=P_p$ with $A,B\subseteq\mathbb{F}_p$ with $|A|,|B|\ge2$, then 
	$$\left(\frac{1}{2}-\varepsilon\right)\frac{\varphi(p-1)}{\sqrt{p}}\le |A|,|B|\le 3\sqrt{p}.$$
\end{theorem}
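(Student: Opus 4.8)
The plan is to reduce everything to the quadratic character and then apply Shkredov's energy method to pass from a product-type bound to the two individual bounds. First I would record the arithmetic input: every element of $P_p$ is a nonzero quadratic non-residue, so if $\chi$ denotes the Legendre symbol then $\chi(a+b)=-1$ for all $a\in A$, $b\in B$. Consequently $\sum_{b\in B}\chi(a+b)=-|B|$ for every $a\in A$ and, symmetrically, $\sum_{a\in A}\chi(a+b)=-|A|$ for every $b\in B$. The only analytic fact needed is the completed second moment: writing $C=(\chi(x+y))_{x,y\in\mathbb{F}_p}$ one has $C^{T}C=pI-J$ (with $J$ the all-ones matrix), equivalently $\sum_{y\in\mathbb{F}_p}\big(\sum_{a\in A}\chi(a+y)\big)^2=|A|(p-|A|)$. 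Restricting the left-hand side to $y\in B$ and using the exact correlation gives $|B|\,|A|^2\le|A|(p-|A|)$, i.e. $|A|(|B|+1)\le p$, and symmetrically $|B|(|A|+1)\le p$. This already recovers the product bound $|A|\,|B|\le p$ underlying Shparlinski's estimate \eqref{Eq. Shparlinski}.

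The heart of the matter is to upgrade these one-sided inequalities, which only control the smaller of $|A|,|B|$, into the balanced bound $|A|,|B|\le 3\sqrt p$. Here I would follow Shkredov \cite{Shkredov}: the point is to estimate the additive energy $E^{+}(A)=\#\{a_1+a_2=a_3+a_4\}$ of the larger set and feed it back through the spectral identity $C^{T}C=pI-J$. Since all shifts $A+b$ lie in the non-residues, the restricted fourth-order correlations of $\chi$ along $A$ and $B$ are rigidly forced to their extreme values, while their completed counterparts are governed by the Weil bound; comparing the two controls $E^{+}(A)$, and an eigenvalue (Cauchy--Schwarz) argument then converts a bound on $E^{+}(A)$ into an individual bound on $|A|$ that is uniform across all size regimes. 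Optimising the resulting inequality produces the clean constant, giving $|A|,|B|\le 3\sqrt p$.

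For the lower bound I would use the primitivity input through the Vinogradov/M\"obius expansion $\mathbf 1_{P_p}(x)=\frac{\varphi(p-1)}{p-1}\sum_{d\mid p-1}\frac{\mu(d)}{\varphi(d)}\sum_{\operatorname{ord}\chi=d}\chi(x)$ only insofar as it yields $|A+B|=|P_p|=\varphi(p-1)$. Since the addition map $A\times B\to A+B$ is onto, $|A|\,|B|\ge\varphi(p-1)$, so $|A|\ge\varphi(p-1)/|B|$. Combined with the sharp form of the upper bound --- which Shkredov's method in fact delivers as $|B|\le(2+o(1))\sqrt p$, rounded up to $3\sqrt p$ in the statement --- this gives $|A|\ge\big(\tfrac12-\varepsilon\big)\varphi(p-1)/\sqrt p$ for $p$ large, and symmetrically for $|B|$; the factor $\tfrac12$ is exactly the reciprocal of the leading constant $2$ in the refined upper bound.

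The main obstacle is the balanced upper bound. The product inequality $|A|\,|B|\le p$ is easy but permits one set to be as large as a positive proportion of $p$ while the other is small, and neither the pure character-sum bound nor the sieve/inclusion--exclusion bound behind Theorem \ref{Thm. A} controls the larger set across the whole range of $|B|$ (the former is weak when $|B|$ is small, the latter loses a factor $2^{|B|}$ when $|B|$ is large). Making Shkredov's energy estimate yield $\max(|A|,|B|)\le 3\sqrt p$ uniformly, and tracking the Weil error terms closely enough to pin the constant without incurring the $\tau(p-1)$ or $W(p-1)$ losses of the earlier approaches, is the delicate step; everything else is bookkeeping.
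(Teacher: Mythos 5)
Your overall framework --- reduce to the quadratic character, use the fact that $\chi_2\equiv-1$ on $P_p$ to evaluate restricted moments of $A\circ\chi_2$ exactly, and compare with completed moments controlled by Weil --- is the same as the paper's, and your second-moment computation giving $|A|(|B|+1)\le p$ is correct (it is essentially the paper's proof of Theorem \ref{Thm. C}). But the step you yourself flag as ``the delicate step'' is genuinely missing, and the architecture you propose for it is circular. The fourth-moment identity (Lemma \ref{Lem. Shkredov on C(f)} combined with Lemma \ref{Lem. bound of sums involving chi2}) yields
$$|A|^4|B|=\sum_{x\in B}\left(A\circ\chi_2\right)^4(x)\le\sum_{x}\left(A\circ\chi_2\right)^4(x)\le(1+2\sqrt p)\,|A|^4+3p\,|A|^2,$$
i.e.\ $|B|\le 1+2\sqrt p+3p/|A|^2$. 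This is \emph{not} ``uniform across all size regimes'': it is vacuous unless $|A|\gg p^{1/4}$, so you cannot establish $|B|\le 3\sqrt p$ first and then deduce the lower bound on $|A|$ from $|A|\,|B|\ge\varphi(p-1)$, as your third paragraph proposes --- a lower bound on $|A|$ is needed to get the upper bound on $|B|$, not the other way around. Note also that no bound of the form $\max(|A|,|B|)\le 3\sqrt p$ can follow from ``$A+B$ lands in the non-residues'' alone: one can have $|A|=2$ and $|B|\sim p/4$ in that setting, so the surjectivity onto $P_p$ and some external lower bound must enter the argument.

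The paper breaks this circle as follows: substituting $|B|\ge\varphi(p-1)/|A|$ into the displayed inequality gives the quadratic inequality $(1+2\sqrt p)|A|^2-\varphi(p-1)|A|+3p\ge 0$, which forces $|A|$ to lie either below the smaller root (of size $\approx 3p/\varphi(p-1)=O(\log\log p)$) or above the larger root (of size $\approx\varphi(p-1)/(2\sqrt p)$); Shparlinski's lower bound from (\ref{Eq. Shparlinski}) excludes the first alternative, yielding $|A|\ge(1/2-\varepsilon)\varphi(p-1)/\sqrt p$, and only then does $|B|\le 2\sqrt p+3p/|A|^2<3\sqrt p$ follow. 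Your proposal never invokes the lower-bound half of Shparlinski's estimate (you cite (\ref{Eq. Shparlinski}) only in connection with the product upper bound), and without it --- or some substitute ruling out the regime where one of the two sets is very small --- the argument does not close.
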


If $|A|=|B|$, we can obtain the following stronger result.

\begin{theorem}\label{Thm. C}
	Let $p$ be an odd prime. If $A+B=P_p$ with $|A|=|B|$, then 
	$$\sqrt{\varphi(p-1)}\le |A|< \sqrt{p}.$$
\end{theorem}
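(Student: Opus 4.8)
The lower bound needs nothing new: since $A+B=P_p$ is the image of $A\times B$ under addition, $\varphi(p-1)=|P_p|=|A+B|\le |A|\,|B|=|A|^2$, so $|A|\ge\sqrt{\varphi(p-1)}$. All the content is in the strict upper bound $|A|<\sqrt p$, and my plan is to extract it from a single quadratic character sum rather than from the full primitive-root indicator; the latter would, after summing over all divisors of $p-1$, reintroduce the factor $W(p-1)$ appearing in Theorem \ref{Thm. A}, destroying the clean constant.

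The key observation is structural: for odd $p$ every primitive element of $\F_p$ is a quadratic non-residue, since a generator of the cyclic group $\F_p^{\times}$ cannot be a square (squares have order dividing $(p-1)/2$). Writing $\eta$ for the Legendre symbol modulo $p$ and recalling that $A+B=P_p$ forces every $a+b$ to be a nonzero non-residue, this gives $\eta(a+b)=-1$ for all $a\in A$, $b\in B$, hence the exact identity
\[
\sum_{a\in A}\sum_{b\in B}\eta(a+b)=-|A|\,|B|.
\]
Thus the hypothesis has been compressed into the statement that this complete character sum attains its largest possible absolute value. I would then bound the same sum from above by Cauchy--Schwarz in the $a$-variable and complete the inner sum to all of $\F_p$:
\[
|A|\,|B|=\Big|\sum_{a\in A}\sum_{b\in B}\eta(a+b)\Big|\le |A|^{1/2}\Big(\sum_{a\in\F_p}\Big|\sum_{b\in B}\eta(a+b)\Big|^2\Big)^{1/2}.
\]
Expanding the second moment and using the elementary evaluation $\sum_{a\in\F_p}\eta(a+b)\eta(a+b')=-1$ for $b\ne b'$ (the diagonal contributing $p-1$) collapses the inner sum to exactly $|B|(p-|B|)$. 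Cancelling a factor $|A|^{1/2}|B|^{1/2}$ yields $|A|\,|B|\le p-|B|<p$, and the equal-size hypothesis turns this into $|A|^2<p$, i.e.\ $|A|<\sqrt p$.

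It is worth noting that the character estimate already gives $|A|\,|B|<p$ for \emph{any} decomposition $A+B=P_p$, balanced or not; the assumption $|A|=|B|$ enters only at the last line, to split the product evenly. The routine points to watch are that no term of the sum vanishes (guaranteed because $A+B=P_p$ excludes $a+b=0$, which is also what makes the first identity an equality rather than an inequality) and the standard reduction of the off-diagonal sum to $\sum_{y\ne 1}\eta(y)=-1$. I expect the only genuine obstacle to be conceptual rather than technical: recognizing that replacing the primitive-root indicator by the single quadratic character --- legitimate precisely because primitive roots are non-residues --- is what removes the $W(p-1)$ loss and produces the sharp constant.
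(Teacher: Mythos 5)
Your proof is correct, and it reaches exactly the paper's quadratic inequality $a^{2}+a\le p$ by a somewhat different route. The underlying idea is the same --- discard the full primitive-root indicator, keep only the fact that $\chi_2$ is identically $-1$ on $P_p$, and compare the resulting extremal correlation over $A\times B$ with a complete second moment over $\F_p$ --- but the execution differs. The paper writes $A\circ\chi_2(x)=-aB(x)+r(x)$, evaluates $\|A\circ\chi_2\|_2^2=pa-a^2$ by invoking Shkredov's identity $\langle f\circ\chi_2,g\circ\chi_2\rangle=p\langle f,g\rangle-\langle f,1\rangle\langle\bar g,1\rangle$, and concludes from $\|r\|_2^2\ge 0$; you instead apply Cauchy--Schwarz to $\sum_{a\in A}\sum_{b\in B}\eta(a+b)=-|A||B|$ and compute the completed second moment $\sum_{a\in\F_p}|\sum_{b\in B}\eta(a+b)|^2=|B|(p-|B|)$ directly from the classical evaluation $\sum_{x}\eta(x)\eta(x+c)=-1$ for $c\ne 0$. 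These are the same quantity ($p|B|-|B|^2$) and essentially the same positivity argument, but your version is self-contained and avoids citing Shkredov's lemma; it also makes explicit the unbalanced intermediate bound $|A||B|\le p-|B|$, which the paper's computation yields as well but does not state. The lower bound is handled identically in both. I see no gaps.
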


We will prove our main results in sections 2--4 resepectively.

\section{Proof of Theorem \ref{Thm. A}}

Let $p$ be an odd prime. We first introduce some notations. Let $\widehat{\mathbb{F}_p^{\times}}$ be the set of all multiplicative characters of $\mathbb{F}_p$. For each positive divisor $d$ of $p-1$, we define 
$$G_d=\{\chi\in\widehat{\mathbb{F}_p^{\times}}: \chi\ \text{is of order}\ d\}.$$ 
Also, let $\chi_2\in\widehat{\mathbb{F}_p^{\times}}$ be the unique quadratic character, i.e., 
$$\chi_2(x)=
\begin{cases}
	0&\mbox{if}\ x=0,\\
	1&\mbox{if}\ x\ \text{is a non-zero square},\\
	-1&\mbox{otherwise}.
\end{cases}
$$
Clearly $\chi_2(x)=-1$ for any $x\in P_p$. 

Let $P_p(x)$ be the characteristic function of the set $P_p$, i.e., 
$$P_p(x)=
\begin{cases}
	1&\mbox{if}\ x\in P_p,\\
	0&\mbox{otherwise}.
\end{cases}
$$
The explicit formula of $P_p(x)$ is well-known (cf. \cite{CH2003,CH2010}).

\begin{lemma}\label{Lem. characteristic function of P}
Let $p$ be an odd prime. Then 
$$P_p(x)=\frac{\varphi(p-1)}{p-1}\sum_{d\mid p-1}\frac{\mu(d)}{\varphi(d)}\sum_{\chi\in G_d}\chi(x),$$
where $\mu(\cdot)$ is the M\"{o}bius function.
\end{lemma}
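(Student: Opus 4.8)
The plan is to reduce the identity to two classical ingredients: the Möbius characterization of primitivity via the discrete logarithm, and the orthogonality-based indicator of the subgroup of $d$-th powers. Fix a generator $g$ of $\mathbb{F}_p^{\times}$ and write each $x\in\mathbb{F}_p^{\times}$ as $x=g^a$. Since the order of $x$ equals $(p-1)/\gcd(a,p-1)$, the element $x$ is primitive precisely when $\gcd(a,p-1)=1$. I would therefore begin from the Möbius identity $P_p(x)=\sum_{d\mid\gcd(a,p-1)}\mu(d)=\sum_{d\mid p-1}\mu(d)\,\mathbf{1}[d\mid a]$, valid for $x\neq0$, and note that for $d\mid p-1$ the condition $d\mid a$ is equivalent to $x$ lying in the subgroup $H_d$ of $d$-th powers.

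Next I would rewrite the indicator of $H_d$ through characters. Because $\widehat{\mathbb{F}_p^{\times}}$ is cyclic of order $p-1$ and $d\mid p-1$, there are exactly $d$ characters $\chi$ with $\chi^d=1$, and orthogonality gives $\mathbf{1}[x\in H_d]=\frac{1}{d}\sum_{\chi^d=1}\chi(x)$. Splitting the characters satisfying $\chi^d=1$ according to their exact order $e\mid d$ turns this into $\frac{1}{d}\sum_{e\mid d}\sum_{\chi\in G_e}\chi(x)$. Substituting back and interchanging the order of summation, I would collect, for each $e\mid p-1$, the total coefficient of $\sum_{\chi\in G_e}\chi(x)$, namely $C_e=\sum_{e\mid d\mid p-1}\mu(d)/d$.

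The computational heart --- and the step I expect to require the most care --- is evaluating $C_e$ and matching it with the claimed coefficient $\frac{\varphi(p-1)}{p-1}\cdot\frac{\mu(e)}{\varphi(e)}$. Here I would use that $\mu(d)=0$ unless $d$ is square-free, so only square-free $e$ contribute, consistent with the factor $\mu(e)$ on the right. For square-free $e$ I would factor $d=ef$ with $\gcd(e,f)=1$ and $f$ running over divisors of $\mathrm{rad}(p-1)/e$, giving $C_e=\frac{\mu(e)}{e}\prod_{q\mid p-1,\,q\nmid e}(1-\tfrac{1}{q})$; the product then simplifies via $\prod_{q\mid p-1}(1-1/q)=\varphi(p-1)/(p-1)$ and $\prod_{q\mid e}(1-1/q)=\varphi(e)/e$ to exactly $\frac{\varphi(p-1)}{p-1}\cdot\frac{\mu(e)}{\varphi(e)}$. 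Assembling these coefficients yields the stated formula. The only points needing vigilance are the bookkeeping of exact versus dividing order among the characters and the coprimality constraints in the factorization $d=ef$; both become routine once the multiplicativity of $\mu$ and $\varphi$ is invoked.
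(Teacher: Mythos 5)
The paper offers no proof of this lemma at all --- it is stated as ``well-known'' with a citation to Cohen--Huczynska --- so there is no in-paper argument to compare against. Your derivation is the standard one (Vinogradov's formula for the indicator of primitive roots) and it is correct and complete in all essentials: writing $x=g^a$, using $\sum_{d\mid\gcd(a,p-1)}\mu(d)$ to detect $\gcd(a,p-1)=1$, identifying $d\mid a$ with membership in the index-$d$ subgroup $H_d$ of $d$-th powers, expanding $\mathbf{1}[x\in H_d]=\frac{1}{d}\sum_{\chi^d=1}\chi(x)$, stratifying by exact order, and then evaluating $C_e=\sum_{e\mid d\mid p-1}\mu(d)/d$. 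The coefficient computation checks out: for square-free $e$ the substitution $d=ef$ with $f\mid \mathrm{rad}(p-1)/e$ is legitimate because $\mathrm{rad}(p-1)/e$ is automatically coprime to $e$, and the Euler products collapse to $\frac{\varphi(p-1)}{p-1}\cdot\frac{\mu(e)}{\varphi(e)}$ exactly as you claim (e.g.\ for $p=7$ one gets $C_1=1/3$, $C_2=-1/3$, $C_3=-1/6$, $C_6=1/6$, matching the formula). The only point you leave implicit is $x=0$: your Möbius identity is stated only for $x\neq 0$, so you should add the one-line remark that both sides vanish at $x=0$ under the convention $\chi(0)=0$ for every multiplicative character (the convention the paper itself adopts for $\chi_2$). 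With that sentence added, the proof is complete.
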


\begin{lemma}\label{Lem. H(x)}
Let $p$ be an odd prime and let $k\ge2$ be an integer. Suppose that $A,B\subseteq\mathbb{F}_p$ with $B=\{b_1,b_2,\cdots,b_{k-1},0\}$ and $A+B=P_p$. Then 
\begin{equation}\label{Eq. H(x)}
H(x)=P_p(x)\left(\chi_2(x+b_1)+1\right)\prod_{j=1}^{k-1}\left(\chi_2(x-b_j)+1\right)=0.
\end{equation}
for any $x\in\mathbb{F}_p$. 
\end{lemma}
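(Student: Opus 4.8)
The plan is to verify the identity $H(x)=0$ pointwise, splitting on whether $x$ is primitive. If $x\notin P_p$ then $P_p(x)=0$ and there is nothing to prove, so all the content lies in the case $x\in P_p$, where $P_p(x)=1$ and I must show that at least one factor of the product vanishes. Since a factor of the form $\chi_2(y)+1$ equals $0$ precisely when $y$ is a quadratic non-residue (and equals $2$ or $1$ according as $y$ is a non-zero square or $y=0$), the goal reduces to: for every primitive $x$, at least one of $x+b_1,\ x-b_1,\ \ldots,\ x-b_{k-1}$ is a non-square.

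First I would record the structural consequence of the hypotheses $A+B=P_p$ and $0\in B$: namely $A=A+\{0\}\subseteq A+B=P_p$, so every element of $A$ is primitive and hence (as already noted in the text) satisfies $\chi_2=-1$. In particular $A$ consists only of non-squares and $0\notin A$. This observation is what converts set-membership statements about $A$ into statements about values of $\chi_2$, and it is the engine of the whole argument. Then, for a fixed primitive $x$, I would argue by contradiction, supposing $H(x)\neq 0$, i.e. that every factor is nonzero. The conditions $\chi_2(x-b_j)+1\neq 0$ for $1\le j\le k-1$ say that each $x-b_j$ is a non-zero square or is $0$, hence in either case $x-b_j\notin A$. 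Writing $x=a+b$ with $a\in A$, $b\in B$ (possible since $x\in P_p=A+B$), this excludes $b=b_j$ for every $j\ge 1$, forcing $b=0$ and therefore $x=a\in A$. At this point the auxiliary factor comes into play: since $x\in A$ and $b_1\in B$, we get $x+b_1\in A+B=P_p$, so $x+b_1$ is a non-square, giving $\chi_2(x+b_1)+1=0$ and contradicting the assumption. Thus $H(x)=0$ in all cases.

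The one delicate point I expect to be the main obstacle is exactly the case in which the only representation of $x$ inside $A+B$ is forced to use the summand $0\in B$; this is precisely what the extra factor $\chi_2(x+b_1)+1$ is designed to absorb, and it explains why that factor appears at all. Tied to this, the small bookkeeping subtlety is to keep track of the fact $0\notin A$, so that the possibility $x-b_j=0$ still keeps $x-b_j$ out of $A$ and does not break the deduction that $b$ must be $0$. Apart from these points, the proof is a routine translation between the square/non-square dichotomy and the value of $\chi_2$.
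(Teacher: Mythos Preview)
Your argument is correct and is essentially the same as the paper's: both use $0\in B$ to get $A\subseteq P_p$, then split according to whether $x\in A$ (making $x+b_1\in P_p$ and killing the factor $\chi_2(x+b_1)+1$) or $x\notin A$ (forcing some $x-b_j\in A\subseteq P_p$ and killing a factor $\chi_2(x-b_j)+1$). You merely package this case split as a proof by contradiction and are a bit more explicit about the harmless possibility $x-b_j=0$.
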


\begin{proof}
Clearly $H(x)=$ for any $x\not\in P_p$. Suppose now $x\in P_p$. As $A+B=P_p$ and $0\in B$, we have 
$A\subseteq P_p$. If $x\in A$, then $x+b_1\in P_p$. This implies $\chi_2(x+b_1)=-1$ and hence $H(x)=0$. If $x\not\in A$, then at least one of $x-b_1,\cdots,x-b_{k-1}$ is contained in $A\subseteq P_p$. This implies $\chi_2(x-b_j)=-1$ for some $j\in\{1,2,\cdots,k\}$ and hence $H(x)=0$. 

This completes the proof.
\end{proof}

We also need the following result which is known as the Weil Theorem (cf. \cite[Theorem 5.41]{LN}).
\begin{lemma}\label{Lem. Weil's theorem}{\rm (Weil's Theorem)}
	Let $\psi\in\widehat{\F_p^{\times}}$ be a character of order $m>1$, and let $f(x)\in\F_p[x]$ be a
	monic polynomial which is not of the form $g(x)^m$ for any $g(x)\in\F_p[x]$. Let $r$ be the number of distinct roots of $f(x)$ in the algebraic closure of $\F_p$. Then for any $a\in\F_p$ we have
	\begin{equation*}
		\left|\sum_{x\in\F_p}\psi(af(x))\right|\le (r-1)p^{1/2}.
	\end{equation*}
\end{lemma}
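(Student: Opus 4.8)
The plan is to interpret the character sum as a piece of the number of $\F_p$-points on the Kummer curve $C \colon y^m = a f(x)$ and then to extract the stated bound from the Riemann Hypothesis for curves over finite fields. I would begin with the elementary counting identity that, for $c \in \F_p^{\times}$, the number of solutions $y \in \F_p$ of $y^m = c$ equals $\sum_{j=0}^{m-1}\psi^j(c)$, while $y^m = 0$ has the single solution $y = 0$. Summing over $x \in \F_p$ gives
\[
\#C(\F_p) = p + \sum_{j=1}^{m-1}\sum_{x\in\F_p}\psi^j\bigl(af(x)\bigr) + E,
\]
where $E$ accounts for the at most $r$ points with $af(x) = 0$. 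Thus the target sum $S := \sum_{x}\psi(af(x))$ occurs as the $j = 1$ summand, and the problem becomes one of separating and bounding the individual twisted sums rather than just their aggregate.

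To isolate each sum, I would introduce the $L$-function attached to the pair $(\psi, f)$. For $n \ge 1$ let $\psi_n = \psi \circ N_{\F_{p^n}/\F_p}$ and $S_n = \sum_{x \in \F_{p^n}}\psi_n(af(x))$, so that $S_1 = S$, and set
\[
L(t) = \exp\Bigl(\sum_{n\ge1} S_n\,\tfrac{t^n}{n}\Bigr).
\]
The key structural fact is that $L(t)$ is a polynomial, and this is exactly where the hypothesis that $f$ is not of the form $g(x)^m$ enters: it guarantees that $\psi(af(\cdot))$ is nontrivial on the function field of $C$, so the associated $L$-function has no pole. A standard analysis of the ramification of the Kummer cover $C \to \mathbb{P}^1$ (equivalently, an Euler characteristic computation for the rank-one Kummer sheaf $\mathcal{L}_\psi(af)$ on $\mathbb{A}^1$ minus the $r$ distinct roots of $f$, where tameness kills the Swan contribution) then shows $\deg L = r - 1$.

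Writing $L(t) = \prod_{i=1}^{r-1}(1 - \omega_i t)$, comparison of logarithmic derivatives yields $S_n = -\sum_{i=1}^{r-1}\omega_i^n$, and in particular $S = -\sum_{i=1}^{r-1}\omega_i$. The whole strength of the estimate now rests on the bound $|\omega_i| \le p^{1/2}$ for every reciprocal root, which is precisely the Riemann Hypothesis for the smooth model of $C$; this is the main obstacle, and it is genuinely deep. I would obtain it either from Weil's original proof via the Hasse–Weil point count on the complete nonsingular curve (computing the genus through Riemann–Hurwitz and tracking how the Frobenius eigenvalues distribute among the characters $\psi^j$ under the $\mu_m$-action on the Jacobian), or from Stepanov's elementary method, which bounds the number of $x$ with $af(x)$ an $m$-th power by constructing auxiliary polynomials that vanish to high order at such points. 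Granting $|\omega_i| \le p^{1/2}$, the triangle inequality gives
\[
|S| = \Bigl|\sum_{i=1}^{r-1}\omega_i\Bigr| \le (r-1)\,p^{1/2},
\]
which is the assertion of the lemma. Since this is a classical theorem of Weil, the honest summary is that the real work is the Riemann Hypothesis for curves, while the remaining steps are the bookkeeping that converts it into the stated character-sum bound.
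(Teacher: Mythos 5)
The paper offers no proof of this lemma for you to be compared against: it is invoked as a classical black box, with the citation \cite[Theorem 5.41]{LN}, and is then applied in the proofs of Theorem \ref{Thm. A} and Lemma \ref{Lem. bound of sums involving chi2}. So the only possible review is of your sketch on its own terms, and on that score the architecture is correct and the deep input is honestly attributed: since $m\mid p-1$ the Kummer sheaf is tame (no Swan contribution), the Euler characteristic of $\mathbb{A}^1$ minus the $r$ distinct roots of $f$ gives $\deg L=r-1$, geometric nontriviality of the sheaf is exactly where the hypothesis that $f$ is not of the form $g(x)^m$ enters (monicity absorbs constants: $af=ch^m$ would force $f$ itself to be the $m$-th power of a monic polynomial), and the weight bound $|\omega_i|\le p^{1/2}$ is precisely the Riemann Hypothesis for curves, obtainable via Weil's Jacobian argument or the Stepanov method, as you say. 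Three small remarks. First, your opening point count is dispensable and its bookkeeping is slightly off: the $j=0$ term contributes $p-z$, where $z$ is the number of $\F_p$-rational zeros of $f$, and the $z$ curve points with $y=0$ restore this to $p$, so your error term $E$ is in fact $0$; in any event your actual argument never uses the curve count, since the $L$-function isolates the single sum $S$ directly. Second, the case $a=0$ should be disposed of separately (the sum is then $0$ because $\psi(0)=0$) before forming the curve $y^m=af(x)$. Third, your writing $|\omega_i|\le p^{1/2}$ rather than equality is not just caution but necessary: when the cover is unramified at infinity, one of the $r-1$ reciprocal roots of $L$ is a unit root of weight $0$ coming from the stalk at infinity rather than a Frobenius eigenvalue of the Jacobian, and your formulation, asserting only the upper bound, survives this case. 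As a sketch of the cited theorem the proposal is sound; the genuinely deep step (RH for curves) is correctly identified as such rather than silently assumed away.
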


Now we prove our first result. For simplicity,  in our proof we write $\sum_{x}$ instead of $\sum_{x\in\mathbb{F}_p}$. Also, we let $\chi_p$ be a generator of $\widehat{\F_p^{\times}}$. 
	
{\bf Proof of Theorem \ref{Thm. A}.} We prove this by contradiction. Suppose that 
$A+B=P_p$ with $A,B\subseteq\mathbb{F}_p$ with $|A|,|B|\ge2$, $|B|=k$, and $$\frac{\varphi(p-1)}{W(p-1)\sqrt{p}}>k2^{k-1}.$$
	
Note that for any $y\in\mathbb{F}_p$ we have 
$$(A+y)+(B-y)=A+B.$$	
By this observation we may assume $0\in B$ and set $B=\{b_1,\cdots,b_{k-1},0\}$.

Let $H(x)$ be as in (\ref{Eq. H(x)}). Then by Lemma \ref{Lem. H(x)} we have 
$$\sum_{x}H(x)=0.$$
On the other hand, one can verify that 
\begin{align*}
\sum_{x}H(x)
&=\sum_{x}P_p(x)+\sum_{x}P_p(x)\chi_2(x+b_1)\\
&+\sum_{x}\sum_{i=1}^{k-1}\sum_{j_1<\cdots<j_i}P_p(x)\chi_2(x-b_{j_1})\cdots\chi_2(x-b_{j_i})\\
&+\sum_{x}\sum_{i=1}^{k-1}\sum_{j_1<\cdots<j_i}P_p(x)\chi_2(x+b_1)\chi_2(x-b_{j_1})\cdots\chi_2(x-b_{j_i}).
\end{align*}

For the first sum on the right hand side, we have 
\begin{equation}\label{Eq. A in the proof of Thm. A}
	\sum_{x}P_p(x)=\varphi(p-1). 	
\end{equation}

For the second sum on the right hand side, we have 
\begin{equation}\label{Eq. B in the proof of Thm. A}
\sum_{x}P_p(x)\chi_2(x+b_1)
=\frac{\varphi(p-1)}{p-1}\sum_{d\mid p-1}\frac{\mu(d)}{\varphi(d)}\sum_{\chi\in G_d}A(\chi),
\end{equation}
where $A(\chi)=\sum_{x}\chi_2(x+b_1)\chi(x)$. 

We now focus on $A(\chi)$. If $\chi\in G_1$, then $A(\chi)=-\chi_2(b_1)$. If $\chi\in G_2$, then $A(\chi)=-1$. In the case $d\mid p-1$ and $d\ge3$, for each $\chi\in G_d$ there is a positive integer $r$ with $(r,d)=1$ such that $\chi=\chi_p^{\frac{p-1}{d}r}$. One can verify that in this case
$$x^{\frac{p-1}{d}r}(x+b_1)^{\frac{p-1}{2}}\neq f(x)^{p-1}$$
for any $f(x)\in\mathbb{F}_p[x]$. Hence by Lemma \ref{Lem. Weil's theorem} we have 
\begin{equation}\label{Eq. bound of A(chi)}
	|A(\chi)|=\left|\sum_{x}\chi_p\left(x^{\frac{p-1}{d}r}(x+b_1)^{\frac{p-1}{2}}\right)\right|\le \sqrt{p}
\end{equation}
if $\chi\in G_d$ with $d\ge3$. 
	
For the third sum on the right hand side, we have 
\begin{equation}\label{Eq. C in the proof of Thm. A}
\begin{split}
&\sum_{x}\sum_{i=1}^{k-1}\sum_{j_1<\cdots<j_i}P_p(x)\chi_2(x-b_{j_1})\cdots\chi_2(x-b_{j_i})\\
=&\sum_{i=1}^{k-1}\sum_{j_1<\cdots<j_i}\frac{\varphi(p-1)}{p-1}\sum_{d\mid p-1}\frac{\mu(d)}{\varphi(d)}\sum_{\chi\in G_d}B_i^{\bf b}(\chi),
\end{split}
\end{equation}	
where ${\bf b}=(j_1,j_2,\cdots,j_i)$ and $B_i^{\bf b}(\chi)=\sum_{x}\chi(x)\chi_2(x-b_{j_1})\cdots\chi_2(x-b_{j_i})$. 

We now focus on $B_i^{\bf b}(\chi)$. If $d\ge3$, for any $\chi\in G_d$ there is a positive integer $r$ with $(r,d)=1$ such that $\chi=\chi_p^{\frac{p-1}{d}r}$. One can verify that 
$$ x^{\frac{p-1}{d}r}(x-b_{j_1})^\frac{p-1}{2}\cdots(x-b_{j_i})^{\frac{p-1}{2}}\neq f(x)^{p-1}$$
for any $f(x)\in\mathbb{F}_p[x]$ in this case. Hence for each $d\ge3$ we have 
$$|B_i^{\bf b}(\chi)|=\left|\sum_{x}\chi_p\left(x^{\frac{p-1}{d}r}(x-b_{j_1})^\frac{p-1}{2}\cdots(x-b_{j_i})^{\frac{p-1}{2}}\right)\right|\le i\sqrt{p}.$$
Note that the above inequality also holds for the cases $d=1,2$. 	
Hence 
\begin{equation}\label{Eq. Bound of B(chi)}
	|B_i^{\bf b}(\chi)|\le i\sqrt{p}.
\end{equation}

For the fourth sum on the right hand side, we have 
\begin{equation}\label{Eq. D in the proof of Thm. A}
\begin{split}
&\sum_{x}\sum_{i=1}^{k-1}\sum_{j_1<\cdots<j_i}P_p(x)\chi_2(x+b_1)\chi_2(x-b_{j_1})\cdots\chi_2(x-b_{j_i})\\
=&\sum_{i=1}^{k-1}\sum_{j_1<\cdots<j_i}\frac{\varphi(p-1)}{p-1}\sum_{d\mid p-1}\frac{\mu(d)}{\varphi(d)}\sum_{\chi\in G_d}C_i^{\bf b}(\chi),
\end{split}
\end{equation}
where $C_i^{\bf b}(\chi)=\sum_{x}\chi(x)\chi_2(x+b_1)\chi_2(x-b_{j_1})\cdots\chi_2(x-b_{j_i})$. 	
	
If $i\ge2$, or $-b_1\neq b_{j_r}$ for any $1\le r\le i$, or $\chi\in G_d$ for some $d\ge2$, then 
\begin{equation}\label{Eq. Bound of C(chi)}
	|C_i^{\bf b}(\chi)|\le (i+1)\sqrt{p}.
\end{equation}	
Also, if $i=1$, and $-b_1=b_{j_r}$ for some $1\le r\le i$, and $\chi\in G_1$, then 
\begin{equation}\label{Eq. Value of C1(chi1)}
	C_1^{\bf b}=p-2.
\end{equation}
In view of (\ref{Eq. A in the proof of Thm. A})--(\ref{Eq. Value of C1(chi1)}), one can verify that $\sum_{x}H(x)$ is greater than 
\begin{align*}
&\varphi(p-1)-\frac{\varphi(p-1)}{p-1}\left(W(p-1)-2\right)\sqrt{p}\\
-&\frac{\varphi(p-1)}{p-1}\sum_{i=1}^{k-1}\binom{k-1}{i}W(p-1)i\sqrt{p}\\
-&\frac{\varphi(p-1)}{p-1}\sum_{i=1}^{k-1}\binom{k-1}{i}W(p-1)(i+1)\sqrt{p}\\
>&\varphi(p-1)-k2^{k-1}W(p-1)\sqrt{p}>0.
\end{align*}
This contradicts $\sum_{x}H(x)=0$. 

In view of the above, we have completed the proof.\qed
	
\section{Proof of Theorem \ref{Thm. B}.}
Let
$$L(\mathbb{F}_p)=\{f:\ f\ \text{is a complex function on $\mathbb{F}_p$}\}.$$
Let $k$ be a positive integer. For any $f\in L(\mathbb{F}_p)$, the function $C_{k+1}(f): \mathbb{F}_p^k\rightarrow\mathbb{C}$ is defined by
$$C_{k+1}(f)(x_1,\cdots,x_k)=\sum_{x}f(x)f(x+x_1)\cdots f(x+x_k).$$
Also, for any $f,g\in L(\mathbb{F}_p)$ we let
$$(f\circ g)(x)=\sum_{y}f(y)g(x+y).$$
Shkredov \cite[Lemma 2.1]{Shkredov} obtained the following result.
\begin{lemma}\label{Lem. Shkredov on C(f)}
	For any $f,g\in L(\mathbb{F}_p)$ and a positive integer $k$,
	\begin{equation*}
		\sum_{x}(f\circ g)^{k+1}(x)
		=\sum_{x_1,\cdots,x_k\in\mathbb{F}_p}C_{k+1}(f)(x_1,\cdots,x_k)\cdot C_{k+1}(g)(x_1,\cdots,x_k).
	\end{equation*}
\end{lemma}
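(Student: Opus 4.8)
The plan is to prove the identity by a direct expansion of both sides followed by a single change of summation variables; no estimates or deep tools are needed, only careful bookkeeping of indices. The left-hand side is a power of a convolution, and the right-hand side is a correlation of correlations, so the whole content of the lemma is that these two ways of organizing the same $(k+2)$-fold sum agree.

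First I would expand the left-hand side. By definition $(f\circ g)(x)=\sum_{y}f(y)g(x+y)$, so raising to the $(k+1)$-th power and introducing one summation variable for each factor gives
$$(f\circ g)^{k+1}(x)=\sum_{y_0,y_1,\cdots,y_k}f(y_0)\cdots f(y_k)\,g(x+y_0)\cdots g(x+y_k).$$
Summing over $x$ then yields a $(k+2)$-fold sum over $x,y_0,\cdots,y_k$.

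The key step is the change of variables. Fixing $y_0=u$ as a base point, I would set $x_i=y_i-y_0$ for $i=1,\cdots,k$, so that $y_i=u+x_i$ and the $f$-product becomes $f(u)f(u+x_1)\cdots f(u+x_k)$, while the $g$-product becomes $g(x+u)g(x+u+x_1)\cdots g(x+u+x_k)$. I then substitute $v=x+u$: for each fixed $u$, as $x$ runs over $\mathbb{F}_p$ so does $v$, and the $g$-product turns into $g(v)g(v+x_1)\cdots g(v+x_k)$. Since the map $(y_0,\cdots,y_k,x)\mapsto(u,x_1,\cdots,x_k,v)$ is a bijection of $\mathbb{F}_p^{k+2}$, the total sum is unchanged.

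After these substitutions the summand factors completely: the variable $u$ appears only in the $f$-terms and $v$ only in the $g$-terms, while $x_1,\cdots,x_k$ are shared. Grouping the inner sum over $u$ into $C_{k+1}(f)(x_1,\cdots,x_k)$ and the inner sum over $v$ into $C_{k+1}(g)(x_1,\cdots,x_k)$, the remaining outer sum over $x_1,\cdots,x_k$ produces exactly the right-hand side. The only point requiring care — and the main, entirely routine, obstacle — is checking that the reindexing above is a genuine bijection so the sum may be legitimately reorganized; this is immediate, since each new variable is obtained from the old ones by an invertible affine transformation over $\mathbb{F}_p$.
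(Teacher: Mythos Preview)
Your argument is correct: the straightforward expansion and the affine change of variables $(y_0,\ldots,y_k,x)\mapsto(u,x_1,\ldots,x_k,v)$ with $u=y_0$, $x_i=y_i-y_0$, $v=x+y_0$ is a bijection of $\mathbb{F}_p^{k+2}$, and after it the summand factors as $C_{k+1}(f)(x_1,\ldots,x_k)\,C_{k+1}(g)(x_1,\ldots,x_k)$ exactly as you claim.

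There is nothing to compare against, however: the paper does not supply its own proof of this lemma but simply quotes it as \cite[Lemma 2.1]{Shkredov}. Your write-up is precisely the standard direct proof one would give for this identity (and is essentially how Shkredov proves it), so it is entirely appropriate here.
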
	
	
Recall that $\chi_2$ is the quadratic multiplicative character of $\mathbb{F}_p$. We also have the following result. 
\begin{lemma}\label{Lem. bound of sums involving chi2}
	Let $a_1,a_2,a_3,a_4\in\mathbb{F}_p$ be pairwise distinct elements. Then 
	$$\left|\sum_{x}\chi_2(x+a_1)\chi_2(x+a_2)\chi_2(x+a_3)\chi_2(x+a_4)\right|\le1+2\sqrt{p}.$$
\end{lemma}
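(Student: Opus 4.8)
The plan is to exploit the multiplicativity of $\chi_2$ together with a fractional linear change of variable that lowers the degree of the relevant polynomial from $4$ to $3$, so that Weil's theorem (Lemma \ref{Lem. Weil's theorem}) yields $2\sqrt{p}$ rather than the cruder $3\sqrt{p}$ one would get by applying it directly. Writing $f(x)=(x+a_1)(x+a_2)(x+a_3)(x+a_4)$, the sum in question equals $S:=\sum_x\chi_2(f(x))$ since $\chi_2$ is multiplicative. Here $f$ is monic of degree $4$ with four distinct roots $-a_1,-a_2,-a_3,-a_4$, so $f$ is not a square and Lemma \ref{Lem. Weil's theorem} gives only $|S|\le 3\sqrt{p}$; the improvement to $1+2\sqrt{p}$ is the crux.

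First I would apply the substitution $t=1/(x+a_4)$, which is a bijection from $\mathbb{F}_p\setminus\{-a_4\}$ onto $\mathbb{F}_p^{\times}$; the excluded point $x=-a_4$ contributes $\chi_2(0)=0$ and may be discarded. Using $x+a_4=1/t$ and $x+a_i=(1+(a_i-a_4)t)/t$ for $i=1,2,3$, one finds
$$\chi_2(f(x))=\chi_2(t)^4\prod_{i=1}^3\chi_2\bigl(1+(a_i-a_4)t\bigr)=\prod_{i=1}^3\chi_2\bigl(1+(a_i-a_4)t\bigr),$$
because $\chi_2(t)^4=1$ for $t\neq0$. Hence $S=\sum_{t\in\mathbb{F}_p^{\times}}\chi_2(g(t))$, where $g(t)=\prod_{i=1}^3(1+(a_i-a_4)t)$ is a cubic with three distinct roots (the $a_i$ being distinct forces $a_i-a_4\neq0$ and the roots $-1/(a_i-a_4)$ to be distinct).

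Next I would restore the missing term at $t=0$: since $g(0)=1$ and $\chi_2(1)=1$, we have $\sum_{t\in\mathbb{F}_p}\chi_2(g(t))=S+1$, so $S=\sum_{t\in\mathbb{F}_p}\chi_2(g(t))-1$. Factoring out the nonzero leading coefficient $c=(a_1-a_4)(a_2-a_4)(a_3-a_4)$ writes $g=c\,h$ with $h$ monic cubic, and $\sum_t\chi_2(g(t))=\chi_2(c)\sum_t\chi_2(h(t))$. Applying Lemma \ref{Lem. Weil's theorem} to the monic cubic $h$ (which has $r=3$ distinct roots and, being of odd degree, is not a square) gives $\bigl|\sum_t\chi_2(h(t))\bigr|\le 2\sqrt{p}$, whence $\bigl|\sum_t\chi_2(g(t))\bigr|\le 2\sqrt{p}$. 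Combining with the $t=0$ correction yields $|S|\le 1+2\sqrt{p}$, as desired.

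The main obstacle is conceptual rather than computational: a direct appeal to Weil's theorem is too lossy, and one must recognize that the degree-$4$ sum hides a degree-$3$ (genus one) object. The bookkeeping to watch is the simplification $\chi_2(t)^4=1$, which is exactly what collapses the quartic to a cubic, and the single leftover term at $t=0$, which accounts for the additive $1$ in the bound.
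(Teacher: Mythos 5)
Your proof is correct and follows essentially the same route as the paper: both reduce the quartic character sum to a cubic one via an inversion-type substitution (the paper first translates so that $a_1=0$ and then replaces $x$ by $1/x$, while you fold both steps into the single substitution $t=1/(x+a_4)$), account for the one leftover term contributing the additive $1$, and finish with Weil's theorem applied to a cubic with three distinct roots. The only cosmetic difference is that you explicitly extract the leading coefficient to make the cubic monic, whereas the paper invokes Lemma \ref{Lem. Weil's theorem} directly in the form $\sum_x\psi(af(x))$; both are valid.
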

\begin{proof}
	Observe that 
	\begin{align*}
	&\sum_{x}\chi_2(x+a_1)\chi_2(x+a_2)\chi_2(x+a_3)\chi_2(x+a_4)\\
	=&\sum_{x}\chi_2(x)\chi_2(x+a_2-a_1)\chi_2(x+a_3-a_1)\chi_2(x+a_4-a_1).
	\end{align*}
Hence we may assume that $a_1=0$ and $a_2,a_3,a_4$ are pairwise distinct non-zero elements. One can verify that 
\begin{align*}
&\sum_{x}\chi_2(x)\chi_2(x+a_2)\chi_2(x+a_3)\chi_2(x+a_4)\\
=&\sum_{x\neq 0}\chi_2\left(1+\frac{a_2}{x}\right)\chi_2\left(1+\frac{a_3}{x}\right)\chi_2\left(1+\frac{a_4}{x}\right)\\
=&\sum_{x\neq 0}\chi_2(1+a_2x)\chi_2(1+a_3x)\chi_2(1+a_4x)\\
=&-1+\sum_{x}\chi_2(1+a_2x)\chi_2(1+a_3x)\chi_2(1+a_4x).
\end{align*}
By Lemma \ref{Lem. Weil's theorem} we have 
	$$\left|\sum_{x}\chi_2(x+a_1)\chi_2(x+a_2)\chi_2(x+a_3)\chi_2(x+a_4)\right|\le1+2\sqrt{p}.$$
This completes the proof.
\end{proof}

We also need the following results involving $\varphi(n)$ obtained by Hatalov\'{a} and \v{S}al\'{a}t \cite{H}.

\begin{lemma}\label{Lem. bounds for varphi(n)}
	Let $n\ge3$ be an integer. Then 
	$$\varphi(n)\ge \frac{\log2}{2}\frac{n}{\log n}.$$
\end{lemma}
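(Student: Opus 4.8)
The plan is to recast the claim in the equivalent multiplicative form $n/\varphi(n) \le (2/\log 2)\log n$ for $n \ge 3$, using the identity $n/\varphi(n) = \prod_{p \mid n}\frac{p}{p-1}$. Since this product depends only on the set of primes dividing $n$, whereas $\log n \ge \log\big(\prod_{p\mid n}p\big)$, it suffices to prove the bound for squarefree $n$ and then pass to the radical $\mathrm{rad}(n)=\prod_{p\mid n}p$ in the general case. I would therefore fix a squarefree $n=p_1\cdots p_k$ with $p_1<\cdots<p_k$ and argue by induction on $k$.

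The base case $k=1$ is immediate, since $\frac{p}{p-1}\le 2\le \frac{2}{\log 2}\log p$ for every prime $p$ (using $\log p\ge\log 2$); note this also covers $p=2$, which is what the radical reduction needs when $n$ is a power of two. For the inductive step, write $m_j=p_1\cdots p_j$ and assume $m_j/\varphi(m_j)\le\frac{2}{\log 2}\log m_j$. Multiplying through by $\frac{p_{j+1}}{p_{j+1}-1}$ and cancelling the common factor $2/\log 2$, one sees that the target inequality for $m_{j+1}$ is equivalent to $\log m_j\le (p_{j+1}-1)\log p_{j+1}$. This reduction is the heart of the matter.

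To close the key inequality I would use two elementary observations. First, every $p_i<p_{j+1}$, so $\log m_j=\sum_{i=1}^{j}\log p_i<j\log p_{j+1}$. Second, $p_1<\cdots<p_{j+1}$ are $j+1$ distinct integers each at least $2$, whence $p_{j+1}\ge j+2$ and so $j\le p_{j+1}-1$. Combining gives $\log m_j<j\log p_{j+1}\le (p_{j+1}-1)\log p_{j+1}$, which completes the induction, and the radical reduction then yields the stated bound for all $n\ge3$. The main obstacle is isolating and verifying the inequality $\log m_j\le (p_{j+1}-1)\log p_{j+1}$; the remainder is bookkeeping, and the essential input is the fact that distinct primes grow at least linearly, which keeps $\log m_j$ small relative to the newly introduced prime.
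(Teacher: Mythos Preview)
Your argument is correct. The reduction to the radical is sound because $n/\varphi(n)=\prod_{p\mid n}p/(p-1)$ depends only on $\mathrm{rad}(n)$ while $\log n\ge\log\mathrm{rad}(n)$, and you rightly note that the base case must include $p=2$ to handle powers of two. In the inductive step, the word ``equivalent'' is slightly loose---what you actually establish is that $\log m_j\le(p_{j+1}-1)\log p_{j+1}$, together with the inductive hypothesis, \emph{suffices} for the target inequality at $m_{j+1}$---but the logic is valid. The key estimate $\log m_j<j\log p_{j+1}\le(p_{j+1}-1)\log p_{j+1}$ follows exactly as you say from $p_i<p_{j+1}$ and $p_{j+1}\ge j+2$.

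As for comparison with the paper: the paper does not prove this lemma at all. It is quoted as a known result of Hatalov\'{a} and \v{S}al\'{a}t (reference [H]) and used as a black box in the proof of Theorem~\ref{Thm. B}. Your self-contained elementary proof by induction on the number of prime factors is therefore a genuine addition rather than a different route to the same destination; it has the advantage of making the paper independent of the cited source for this particular estimate.
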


For any $A\subseteq\mathbb{F}_p$, let $A(\cdot)$ denote the characteristic function of $A$, i.e., 
$$A(x)=
\begin{cases}
	1&\mbox{if}\ x\in A,\\
	0&\mbox{otherwise}.
\end{cases} $$	

{\bf Proof of Theorem \ref{Thm. B}.} Suppose $A+B=P_p$ with $|A|,|B|\ge2$. Then 
\begin{align*}
\sum_{x\in B}\left(A\circ\chi_2\right)^4(x)
&=\sum_{x\in B}\sum_{y_1,y_2,y_3,y_4}\prod_{j=1}^4A(y_j)\chi_2(x+y_j)\\
&=\sum_{x\in B}\sum_{y_1,y_2,y_3,y_4\in A}\prod_{j=1}^{4}(-1)=|A|^4|B|.
\end{align*}
Hence 
$$|A|^4|B|=\sum_{x\in B}\left(A\circ\chi_2\right)^4(x)\le\sum_{x}\left(A\circ\chi_2\right)^4(x).$$
By Lemma \ref{Lem. Shkredov on C(f)} we have 
$$\sum_{x}\left(A\circ\chi_2\right)^4
=\sum_{x_1,x_2,x_3}C_4(A)(x_1,x_2,x_3)\cdot C_4(\chi_2)(x_1,x_2,x_3).$$
By Lemma \ref{Lem. bound of sums involving chi2} we have 
$$
|C_4(\chi_2)(x_1,x_2,x_3)|\le
\begin{cases}
1+2\sqrt{p}&\mbox{if}\ (x_1,x_2,x_3)\not\in E,\\
p&\mbox{otherwise},
\end{cases}
$$
where 
$$E=\{(x,x,0),(x,0,x),(0,x,x): x\in\mathbb{F}_p^{\times}\}\cup\{(0,0,0)\}.$$
	
In view of the above, we obtain 
$$|A|^4|B|\le (1+2\sqrt{p})|A|^4+3p|A|^2,$$	
and hence 
\begin{equation}\label{Eq. A in the proof of Thm. B}
	|A|^2|B|\le (1+2\sqrt{p})|A|^2+3p.
\end{equation}
Since $|A||B|\ge|A+B|=|P_p|=\varphi(p-1)$, by (\ref{Eq. A in the proof of Thm. B}) we obtain 
\begin{equation}\label{Eq. B in the proof of Thm. B}
	(1+2\sqrt{p})|A|^2-\varphi(p-1)|A|+3p\ge0.
\end{equation}
If $p$ is large enough, then by Lemma \ref{Lem. bounds for varphi(n)} the inequality (\ref{Eq. B in the proof of Thm. B}) implies either 
$$|A|\ge\frac{\varphi(p-1)+\sqrt{\Delta}}{2(1+2\sqrt{p})}$$
or 
$$|A|\le\frac{\varphi(p-1)-\sqrt{\Delta}}{2(1+2\sqrt{p})},$$
where 
$$\Delta=\varphi(p-1)^2-12p(1+2\sqrt{p}).$$
However, by Sharplinski's result (\ref{Eq. Shparlinski}) we have 
$$|A|>\frac{\varphi(p-1)-\sqrt{\Delta}}{2(1+2\sqrt{p})}$$
if $p$ is large enough.  By Lemma \ref{Lem. bounds for varphi(n)} again we observe that 
$$
\lim_{p\rightarrow\infty}\left(
\frac{\varphi(p-1)+\sqrt{\Delta}}{2(1+2\sqrt{p})}\cdot\frac{\sqrt{p}}{\varphi(p-1)}\right)=\frac{1}{2}.
$$
Hence for any $0<\varepsilon<1/2$, if $p$ is large enough, then 
\begin{equation}\label{Eq. C in the proof of Thm. B}
|A|\ge\frac{\varphi(p-1)+\sqrt{\Delta}}{2(1+2\sqrt{p})}
\ge\left(\frac{1}{2}-\varepsilon\right)\frac{\varphi(p-1)}{\sqrt{p}}.
\end{equation}

On the other hand, the inequality (\ref{Eq. A in the proof of Thm. B}), together with (\ref{Eq. C in the proof of Thm. B}) and Lemma \ref{Lem. bounds for varphi(n)}, implies that 
\begin{equation}\label{Eq. D in the proof of Thm. B}
	|B|\le 2\sqrt{p}+\frac{3p}{|A|^2}\le 2\sqrt{p}+\frac{3p^2}{(1/2-\varepsilon)^2\varphi(p-1)^2}<3\sqrt{p}
\end{equation}
if $p$ is large enough. Interchanging $A$ and $B$, one can also obtain the lower bound of $|B|$ and the upper bound of $|A|$. In view of the above, when $p$ is large enough, then 
$$\left(\frac{1}{2}-\varepsilon\right)\frac{\varphi(p-1)}{\sqrt{p}}\le|A|,|B|\le3\sqrt{p}.$$

This completes the proof.\qed

\section{Proof of Theorem \ref{Thm. C}.} 
For any $f,g\in L(\mathbb{F}_p)$, the inner product $\langle f,g\rangle$ of $f,g$ is defined by
$$\langle f,g\rangle=\sum_{x}f(x)\overline{g(x)}.$$
Also, we let 
$$||f||_2=(\langle f,f\rangle)^{1/2}=\left(\sum_{x}|f(x)|^2\right)^{1/2}.$$

Shkredov \cite[Lemma 2.4]{Shkredov} obtained the following result.
\begin{lemma}\label{Lem. Shkredov on inner product}
Let $f,g\in L(\mathbb{F}_p)$. Then 
	$$\langle f\circ\chi_2,g\circ\chi_2\rangle=p\langle f,g\rangle-\langle f,1\rangle\cdot\langle \bar{g},1\rangle,$$
where $\bar{g}\in L(\mathbb{F}_p)$ with $\bar{g}(x)=\overline{g(x)}$. 
\end{lemma}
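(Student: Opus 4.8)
The plan is to prove the identity by a direct expansion of the left-hand side, reducing everything to a single classical quadratic character sum. First I would unfold the definitions: writing $(f\circ\chi_2)(x)=\sum_y f(y)\chi_2(x+y)$ and likewise for $g$, and using that $\chi_2$ is real-valued (so conjugation touches only $g$), the inner product becomes the triple sum
$$\langle f\circ\chi_2,g\circ\chi_2\rangle=\sum_x\sum_y\sum_z f(y)\overline{g(z)}\chi_2(x+y)\chi_2(x+z).$$
Swapping the order of summation so as to carry out the $x$-sum first isolates the quantity $S(y,z)=\sum_x\chi_2(x+y)\chi_2(x+z)$, which is the only genuine input to the computation.

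The key step is the evaluation of $S(y,z)$. After the translation $u=x+y$ it equals $\sum_u\chi_2(u)\chi_2(u+a)$ with $a=z-y$. When $a=0$ this is $\sum_u\chi_2(u)^2=p-1$, since $\chi_2(u)^2=1$ for $u\ne0$. When $a\ne0$, for $u\ne0$ one has $\chi_2(u)\chi_2(u+a)=\chi_2(1+a/u)$, and as $u$ runs over $\mathbb{F}_p^{\times}$ the argument $1+a/u$ runs over $\mathbb{F}_p\setminus\{1\}$, so the sum equals $\sum_{v\ne1}\chi_2(v)=-\chi_2(1)=-1$. Thus $S(y,z)=p-1$ if $y=z$ and $S(y,z)=-1$ otherwise; equivalently $S(y,z)=p\,[y=z]-1$, using the Iverson bracket.

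Finally I would substitute this back into the triple sum. The term $p\,[y=z]$ collapses the double sum to its diagonal, yielding $p\sum_y f(y)\overline{g(y)}=p\langle f,g\rangle$, while the constant $-1$ factors the double sum as $-\big(\sum_y f(y)\big)\big(\sum_z\overline{g(z)}\big)=-\langle f,1\rangle\langle\bar{g},1\rangle$. Adding the two pieces gives precisely the asserted identity. The argument is essentially bookkeeping; the only non-formal ingredient is the evaluation of $S(y,z)$, so that character-sum computation is where the real content lies, and the sole place where care is needed is in tracking the $u=0$ (and $u=-a$) exclusions when passing to $\chi_2(1+a/u)$.
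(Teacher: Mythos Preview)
Your proof is correct: the direct expansion, the evaluation of the inner character sum $S(y,z)=\sum_x\chi_2(x+y)\chi_2(x+z)$ as $p\,[y=z]-1$, and the final substitution are all carried out cleanly, and the handling of the $u=0$ term and the change of variable $v=1+a/u$ is sound.

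The paper does not actually supply a proof of this lemma; it merely quotes it from Shkredov \cite[Lemma 2.4]{Shkredov}. Your argument is the standard one (and essentially the one Shkredov gives): everything reduces to the classical identity $\sum_{u}\chi_2(u)\chi_2(u+a)=-1$ for $a\neq0$, which you derive correctly. So there is nothing to compare against here beyond noting that you have written out what the paper leaves as a citation.
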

Now we are in a position to prove our third result. This proof uses the method introduced by Shkredov \cite{Shkredov}.

{\bf Proof of Theorem \ref{Thm. C}.} Suppose $A+B=P_p$ with $|A|=|B|=a$. Then we define a function $r: \mathbb{F}_p\rightarrow \mathbb{Z}$ by 
$$A\circ\chi_2(x)=-aB(x)+r(x)$$
for any $x\in\mathbb{F}_p$. Note that for any $x\in B$ we have 
\begin{align*}
	A\circ\chi_2(x)=\sum_{y}A(y)\chi_2(x+y)=\sum_{x\in A}(-1)=-a=-aB(x).
\end{align*}
Hence $r(x)=0$ for any $x\in B$. Now we compute $||r||_2^2$. By definition 
\begin{align*}
	||r||_2^2
	=\sum_{x\not\in B}r(x)^2
	=\sum_{x\not\in B}\left(A\circ\chi_2\right)^2(x)
	=||A\circ\chi_2||_2^2-\sum_{x\in B}\left(A\circ\chi_2\right)^2(x).
\end{align*}
One can verify that 
\begin{align*}
\sum_{x\in B}\left(A\circ\chi_2\right)^2(x)
&=\sum_{x\in B}\sum_{y,z}A(y)A(z)\chi_2(x+y)\chi_2(x+z)\\
&=\sum_{x\in B}\sum_{y,z\in A}1=a^3. 
\end{align*}
Also, by Lemma \ref{Lem. Shkredov on inner product} we have 
$$||A\circ\chi_2||_2^2=p\langle A,A\rangle-\langle A,1\rangle^2=pa-a^2.$$
In view of the above, we obatin 
$$||r||_2^2=pa-a^2-a^3.$$
As $||r||_2^2\ge0$, we obtain 
$$a^2+a-p\le 0$$
and hence 
$$a\le \frac{-1+\sqrt{1+4p}}{2}<\sqrt{p}.$$

On the other hand, as $A+B=P_p$, we have 
$$a^2=|A||B|\ge|A+B|=|P_p|=\varphi(p-1)$$
and hence 
$$a\ge \sqrt{\varphi(p-1)}.$$

In view of the above, we have completed the proof.\qed

\Ack\ The authors would like to thank Prof. Hao Pan for his encouragement.

\end{document}